\documentclass[12pt]{article}
\usepackage[T2A]{fontenc}
\usepackage[english]{babel}%
\usepackage{amsmath,amssymb,amsthm}
\usepackage{graphics}
\usepackage{graphicx}
\textwidth=17cm \topmargin=0pt \headheight=0pt \headsep=0pt
\topskip=0pt \textheight=684pt \oddsidemargin=0.5cm
\evensidemargin=0.5cm \marginparwidth=0pt \marginparsep=0pt
\renewcommand{\leq}{\leqslant}
\renewcommand{\geq}{\geqslant}

\newtheorem{proposition}{\indent Proposition}

\begin{document}
\centerline{\large\bf A ``converse'' stability condition is necessary}
\smallskip
\centerline{\large\bf for a compact higher order scheme on non-uniform meshes}
\smallskip
\centerline{\large\bf for the time-dependent Schr\"{o}dinger equation}
\begin{center}{ Alexander Zlotnik$^{\dagger}$, Raimondas \v{C}iegis$^{\ddagger}$}
\end{center}
\par\noindent$^{\dagger}$
{National Research University Higher School of Economics, Myasnitskaya 20, 101000 Moscow, Russia,
e-mail: \text{azlotnik@hse.ru}
\smallskip\par\noindent
$^\ddagger$ {Vilnius Gediminas Technical University,
Saul{\. e}tekio al. 11, LT-10223 Vilnius, Lithuania,\\
e-mail: \text{raimondas.ciegis@vgtu.lt}}

\begin{abstract}
\noindent The stability bounds and error estimates for a compact higher order Numerov-Crank-Nicolson scheme on non-uniform space meshes for the 1D time-dependent Schr\"{o}dinger equation have been recently derived.
This analysis has been done in $L^2$ and $H^1$ mesh norms and used the non-standard ``converse'' condition $h_\omega\leq c_0\tau$, where $h_\omega$ is the mean space step, $\tau$ is the time step and $c_0>0$.
Now we prove that such condition is \textit{necessary} for some families of non-uniform meshes and any space norm.
Also numerical results show unacceptably wrong behavior of numerical solutions (their dramatic mass non-conservation) when this condition is violated.
\end{abstract}
\textbf{Keywords:} stability, non-uniform space mesh, compact scheme, Numerov-Crank-Nicolson scheme, time-dependent Schr\"{o}dinger equation
\smallskip\par\noindent\textbf{MSC2010:} 65M06, 65M12
\section{\large Introduction}
Compact higher order Numerov-type three-point finite-difference schemes are widely used in practical numerical solution of the Schr\"odinger-like equation; for example, see recent papers \cite{CGL12,HBG13,Ro14,S09,SPM10,SS14}, etc.
This is due to their higher order in space and ability to reduce errors significantly, with almost the same cost of implementation as for more standard 2nd order schemes.
\par In many situations, for example, in computations of the interactions of wave packets with localized potentials (barriers and especially wells) non-uniform meshes near them are very desirable.
The stability bounds and error estimates in $L^2$ and $H^1$ mesh norms for the Numerov-Crank-Nicolson scheme on non-uniform space meshes for the 1D time-dependent Schr\"{o}dinger equation has been recently derived by the second author in \cite{Z15}.
The analysis has turned out to be much more complicated than for the 2nd order schemes or any order finite element method in space \cite{DZZ09,ZZ12} due to the non-self-adjointness and non-positive definiteness of the Numerov averaging operator in general.

\par Moreover, the analysis needs a collection of additional not so simple conditions on meshes and the potential and leads to the non-uniform in time bounds in contrast to the absence of \textit{any conditions} and uniform bounds in \cite{DZZ09,ZZ12}.
Surprisingly, even for the zero potential, this collection contains the non-standard ``converse'' condition like
\begin{equation}
 h_\omega\leq c_0\tau,
\label{convcond}
\end{equation}
where $h_\omega$ is the mean space step, $\tau$ is the time step and $c_0>0$.

The aim of this paper is to demonstrate that this condition is nevertheless \textit{necessary} at least for some families of non-uniform meshes but \textit{any space norm}. We confine ourselves by the case of zero Dirichlet boundary condition much simpler than in \cite{DZZ09,ZZ12,Z15}.

\par We also present numerical results showing unacceptably wrong behavior of numerical solutions, namely, their dramatic mass non-conservation, when this condition is violated, i.e. $\tau$ decreases for fixed $h_\omega$.
\par Practically, the results of the paper lead to the conclusion that the Numerov-Crank-Nicolson scheme on non-uniform space meshes is not robust.
Unfortunately it seems that the similar situation can occur easily for compact schemes in general at the absence of clear and rigorous stability theory for them.
In this respect the alternative higher order finite element method in space looks much more attractive.
\section{\large Main results}
\label{sect1}
We consider the initial-boundary value problem for the 1D time-dependent Schr\"o\-din\-ger equation
\begin{gather}
 i\hbar\frac{\partial\psi}{\partial t}=
 -\frac{\hbar^2}{2m_0}\,\frac{\partial^2\psi}{\partial x^2}+V\psi\ \ \mbox{on}\ \ \Omega=(0,X),\ \ \mbox{for}\ \ t>0,
\label{eq0}\\[1mm]
 \psi|_{x=0,X}=0,\ \
 \left. \psi\right|_{t=0}=\psi^0(x)\ \ \mbox{on}\ \ \Omega.
\label{ic}
\end{gather}
Hereafter $i$ is the imaginary unit, $\hbar>0$ and $m_0>0$ are physical constants, the sought wave function $\psi=\psi(x,t)$ is complex-valued and $V=V(x)$ is the given real coefficient (potential).
Let $c_\hbar:=\hbar^2/(2m_0)$.
For this problem, the mass conservation property is well-known
\begin{gather}
 \|\psi(\cdot,t)\|_{L^2(\Omega)}^2=\|\psi^0\|_{L^2(\Omega)}^2\ \ \mbox{for}\ \ t\geq 0.
\label{masslaw}
\end{gather}

\par We define a non-uniform mesh $\overline{\omega}_{h}$ in $x$ on $[0,X]$ with the
nodes $0=x_0<\dots <x_J=X$ and the steps $h_j:=x_j-x_{j-1}$.
We utilize the backward and modified forward difference quotients
\begin{gather*}
 \overline{\partial}_xW:= \frac{W-W_-}{h},\ \
 \widehat{\partial}_xW:= \frac{W_+ -W}{\bar{h}},\ \
 \overline{s}_xW:= \frac{W_-+ W}{2}
\end{gather*}
with $W_{\pm j}:=W_{j\pm 1}$ for any $j$ and $\bar{h}_{j}:=(h_j+h_{j+1})/2$.

\par Let $H(\overline{\omega}_{h})$ be the space of the complex-valued functions $W$ on $\overline{\omega}_{h}$ such that $W_0=W_J=0$ equipped with a mesh counterpart of the complex $L_2(\Omega)$--inner product and the associated norm
\begin{gather*}
 (U,W)_{\omega_h}:=\sum_{j=1}^{J-1}U_jW_j^*\bar{h}_j,\ \ \|U\|_{\omega_h}:=(U,U)_{\omega_h}^{1/2},
 \end{gather*}
where $z^*$ denotes the complex conjugate for $z\in \mathbb{C}$.

\par We define the uniform mesh in $t$ with the nodes $t_m=m\tau$, $m\geq 0$, and the step $\tau>0$.
We use the backward difference quotient, the symmetric average and the backward shift in $t$
\[
 \overline{\partial}_tY:= \frac{Y-\check{Y}}{\tau},\ \
 \overline{s}_tY:= \frac{\check{Y}+Y}{2},\ \
 \check{Y}^m:=Y^{m-1}.
\]
\par We recall the three-point Numerov averaging operator
\[
 s_NW:=\frac{1}{12}\,\alpha W_{-}+\frac{5}{6}\,\gamma W+\frac{1}{12}\,\beta W_{+},\  \text{with}\ \
 \alpha=2-\frac{h_{+}^2}{h\bar{h}},\
 \gamma=1+\frac{(h_{+}-h)^2}{5hh_{+}},\
 \beta=2-\frac{h^2}{h_{+}\bar{h}};
\]
for derivations and equivalent forms of $\alpha$, $\beta$ and $\gamma$, see \cite{JIS84,radziunas2014B,Z15}.
It is easy to check that $(\alpha+10\gamma+\beta)/12=1$.
\par Recall that the natural property $\alpha_j\geqslant 0$ and $\beta_j\geqslant 0$ (not in use below) is valid only under the rather restrictive condition
$\frac{2}{\sqrt{5}+1}\leq\frac{h_{j+1}}{h_j}\leq\frac{\sqrt{5}+1}{2}$
noted in \cite{JIS84}.
For the uniform mesh $\overline{\omega}_h$, clearly $\alpha=\beta=\gamma=1$ and $s_N$ takes the most well-known form
$s_NW=(W_{-}+10W+W_{+})/12$.

\par We study the following Numerov-Crank-Nicolson scheme for problem \eqref{eq0}-\eqref{ic}
\begin{gather}
 i\hbar s_N\overline{\partial}_t\Psi^m=-c_\hbar\widehat{\partial}_x\overline{\partial}_x\overline{s}_t\Psi^m+s_N(V\overline{s}_t\Psi^m)\ \ \mbox{on}\ \ \omega_h,\ \ m\geq 1,
\label{dse}\\[1mm]
 \Psi_0^m=\Psi_J^m=0,\ \ m\geq 0,\ \
\Psi^0\ \ \mbox{is given on}\ \ \overline{\omega}_h.
\label{dse2}
\end{gather}
\par We are interested in analysis of the stability with respect to the initial data bound
\begin{gather}
 \|\Psi^m\|_h\leq C(1+\varkappa\tau)^m\|\Psi^0\|_h\ \ \text{for any}\ \ \Psi^0\in H(\overline{\omega}_h),\ \ m\geq 0,
\label{stabbound}
\end{gather}
with some $C>0$ and $\varkappa>0$ independent of the meshes, for $0<\tau\leq\tau_0$.
Here $\|\cdot\|$ is \textit{an arbitrary fixed norm} in $H(\overline{\omega}_h)$, in particular, $\|\cdot\|_h=\|\cdot\|_{\omega_h}$.
\par We introduce the following generalized eigenvalue problem
\begin{gather}
 -\widehat{\partial}_x\overline{\partial}_xW=\lambda s_NW,\ \ W\in H(\overline{\omega}_h),
\ \ W\not\equiv 0.
\label{eigprob}
\end{gather}
The operator $-\widehat{\partial}_x\overline{\partial}_x$ is self-adjoint and positive definite in $H(\overline{\omega}_h)$ but in general $s_N$ is not, for the non-uniform $\overline{\omega}_h$ (see \cite{Z15} for more details). Consequently complex eigenvalues $\lambda=\lambda_R+i\lambda_I$ with $\lambda_I\neq 0$ can exist, and they do exist for some meshes, see Section \ref{sect3}.
Clearly if $\{\lambda,W\}$ is an eigenpair of this problem, then $\{\lambda^*,W^*\}$ is an eigenpair too.
\par Let $\{\lambda,W\}$ be an eigenpair of problem \eqref{eigprob} and $\Psi^0=W$.
Then the function $\Psi^m=\Phi^mW$ solves equations \eqref{dse}-\eqref{dse2} with $V=0$ provided that
\[
 i\overline{\partial}_t\Phi^m=a\lambda\overline{s}_t\Phi^m,\ \ m\geq 1,\ \ \Phi^0=1,
\]
with $a:=c_\hbar/\hbar$, i.e.
\[
 \Phi^m=q_\tau^m(\lambda),\ \ q_\tau(\lambda)=\frac{1-i\frac{a\lambda\tau}{2}}{1+i\frac{a\lambda\tau}{2}}.
\]
For such a solution, the stability bound \eqref{stabbound} is equivalent to the spectral stability condition
\begin{gather}
 |q_\tau(\lambda)|\leq 1+\varkappa\tau.
\label{spectrcond}
\end{gather}

\par It is easy to calculate
\[
 |q_\tau(\lambda)|^2=1+\frac{4 \tilde{\lambda}_I\tau}{(\tilde{\lambda}_R\tau)^2+(\tilde{\lambda}_I\tau-1)^2},\ \
 \text{with}\ \ \tilde{\lambda}=\tilde{\lambda}_R+i\tilde{\lambda}_I:=\frac{a\lambda}{2}.
\]
Consequently, for $\lambda_I>0$ one can rewrite \eqref{spectrcond} as
\begin{gather}
 \frac{4}{\varkappa(2+\varkappa\tau)}+2\tau\leq\frac{|\tilde{\lambda}|^2}{\tilde{\lambda}_I}\tau^2
 +\frac{1}{\tilde{\lambda}_I}.
\label{stabcond 1}
\end{gather}
\par Let $\overline{\omega}_{h}^{\,0}$ be a fixed mesh with the nodes $0=x_0^0<\ldots<x_{J_0}^0=X$ (where $J_0\geq 2$) and the mean step $h_{\omega^0}=X/J_0$.
We construct the family of meshes $\overline{\omega}_{h}^{\,0,K}$, $K\geq 1$, on $\bar{\Omega}$ with the nodes
\begin{gather*}
 x_{2kJ_0+l}=2k\frac{X}{K}+\frac{x_l^0}{K}\ \ \text{for any}\ \ 0\leq l\leq J_0-1,\ \ 0\leq2k<K,
\\
 x_{(2k-1)J_0+l}=2k\frac{X}{K}-\frac{x_{J_0-l}^0}{K}\ \ \text{for any}\ \ 0\leq l\leq J_0-1,\ \ 1\leq 2k-1<K,
\end{gather*}
and $x_J=X$, with $J=J_0K$, and the mean step $h_\omega=X/J=h_{\omega^0}/K$.

\par We define the operator $\Pi_K: H(\overline{\omega}_h^{\,0})\to H(\overline{\omega}_h^{\,0,K})$ such that
\begin{gather*}
 \Pi_Kw_{2kJ_0+l}=w_l\ \ \text{for any}\ \ 0\leq l\leq J_0-1,\ \ 0\leq2k<K,
\\
 \Pi_Kw_{(2k-1)J_0+l}=-w_{J_0-l}\ \ \text{for any}\ \ 0\leq l\leq J_0-1,\ \ 1\leq 2k-1<K.
\end{gather*}
\begin{proposition}
\label{prop1}
Let $\{\lambda^0,w\}$ be an eigenpair of problem \eqref{eigprob} for $\overline{\omega}_h=\overline{\omega}_h^{\,0}$.
Then $\{\lambda^0K^2,\Pi_Kw\}$ is an eigenpair of problem \eqref{eigprob} for $\overline{\omega}_h=\overline{\omega}_h^{\,0,K}$.
\end{proposition}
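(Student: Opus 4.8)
The plan is to exploit two elementary invariances of the operators in \eqref{eigprob} under the geometric operations used to build $\overline{\omega}_h^{\,0,K}$ from $\overline{\omega}_h^{\,0}$, namely a uniform scaling by $1/K$ and repeated reflection. First I would record how each operator behaves under scaling. The Numerov coefficients $\alpha,\beta,\gamma$ are homogeneous of degree zero in the steps, since they depend only on the ratios entering $h_{+}^2/(h\bar h)$, $h^2/(h_{+}\bar h)$ and $(h_{+}-h)^2/(hh_{+})$; hence $s_N$ is \emph{invariant} under any uniform scaling of the mesh. By contrast $\overline\partial_x$ and $\widehat\partial_x$ each carry one factor $1/h$, so $-\widehat\partial_x\overline\partial_x$ is multiplied by $K^2$ when every step is divided by $K$. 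This already accounts for the factor $K^2$ in the asserted eigenvalue and indicates that $\Pi_Kw$ should be read as the reflected, tiled copy of $w$.

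Next I would confirm that $\Pi_Kw\in H(\overline\omega_h^{\,0,K})$ and check \eqref{eigprob} at the three kinds of nodes of $\overline\omega_h^{\,0,K}$. The boundary values are immediate: $\Pi_Kw$ vanishes at $x_0=0$ and at $x_J=X$ because $w_0=w_{J_0}=0$, and $\Pi_Kw\not\equiv0$ since $w\not\equiv0$. At an interior node of an \emph{even} (unreflected) block the local three-point step pattern is exactly the base pattern divided by $K$, so $s_N(\Pi_Kw)=s_Nw$ and $-\widehat\partial_x\overline\partial_x(\Pi_Kw)=K^2\bigl(-\widehat\partial_x\overline\partial_x w\bigr)$, with the right-hand sides evaluated on $\overline\omega_h^{\,0}$ at the corresponding base node. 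The base equation $-\widehat\partial_x\overline\partial_x w=\lambda^0 s_Nw$ then gives $-\widehat\partial_x\overline\partial_x(\Pi_Kw)=\lambda^0K^2 s_N(\Pi_Kw)$ there.

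At an interior node of an \emph{odd} (reflected) block the order of the two adjacent steps is reversed relative to the base node it copies. Reversing $h\leftrightarrow h_{+}$ interchanges $\alpha\leftrightarrow\beta$ and leaves $\gamma$ fixed, while the sign rule in $\Pi_K$ turns the three local values $w_{j-1},w_j,w_{j+1}$ into $-w_{j+1},-w_j,-w_{j-1}$. Carrying out the two short computations I expect $s_N(\Pi_Kw)=-s_Nw$ and $-\widehat\partial_x\overline\partial_x(\Pi_Kw)=-K^2\bigl(-\widehat\partial_x\overline\partial_x w\bigr)$; the overall sign change is the same on both sides, so it cancels in the ratio and \eqref{eigprob} again holds with eigenvalue $\lambda^0K^2$.

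The junction nodes, those with index a multiple of $J_0$, are the delicate case and the step I expect to demand the most care. The construction is arranged precisely so that at each junction the two adjoining steps coincide (both equal $h_1^0/K$ or $h_{J_0}^0/K$): the mesh is locally symmetric there, whence $\alpha=\beta=\gamma=1$, and at the same time $\Pi_Kw$ vanishes at the junction while being odd across it, its two neighbouring values being negatives of each other. Therefore $s_N(\Pi_Kw)=0$, and since the two backward quotients on either side are equal, $-\widehat\partial_x\overline\partial_x(\Pi_Kw)=0$ as well, so \eqref{eigprob} holds trivially as $0=\lambda^0K^2\cdot0$. The two nodes adjacent to $x_0$ and $x_J$ need nothing new: they are ordinary interior nodes of the first and last blocks, covered by the even/odd analysis once $w_0=w_{J_0}=0$ is used. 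Assembling the three cases shows that $\{\lambda^0K^2,\Pi_Kw\}$ solves \eqref{eigprob} on $\overline\omega_h^{\,0,K}$, as claimed.
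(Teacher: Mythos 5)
Your proof is correct and takes essentially the same route as the paper: the scale invariance of $s_N$ together with the $K^{2}$ scaling of $-\widehat{\partial}_x\overline{\partial}_x$ handles all nodes interior to the blocks, while at the junction nodes $j\in\{kJ_0\}_{k=1}^{K-1}$ the equal adjacent steps, $\Pi_Kw_j=0$ and $\Pi_Kw_{j-1}=-\Pi_Kw_{j+1}$ make both sides of \eqref{eigprob} vanish. Your explicit sign computation for the reflected (odd) blocks, where $\alpha\leftrightarrow\beta$ swap and both operators pick up the same factor $-1$, merely spells out a step the paper's proof leaves implicit in its scaling argument.
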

\begin{proof}
A function $\widehat{\partial}_x\overline{\partial}_xW_j$ is scaled by a multiplier $\alpha^{-2}$ and $s_NW_j$ remains unchanged after scaling $h_j\to h_j/\alpha$ and $h_{j+1}\to h_{j+1}/\alpha$ with $\alpha>0$. Consequently
\begin{gather}
 -\widehat{\partial}_x\overline{\partial}_x\Pi_Kw_j=\lambda K^2 s_N\Pi_Kw_j\ \ \text{for}\ \ 1\leq j\leq J_0K-1,\ \ j\not\in\{kJ_0\}_{k=1}^{K-1}.
\label{eigprob 2}
\end{gather}
\par In addition we have
\[
 \widehat{\partial}_x\overline{\partial}_x\Pi_Kw_j=0,\ \ s_N\Pi_Kw_j=0\ \ \text{for}\ \ j\in \{kJ_0\}_{k=1}^{K-1}
\]
since $\Pi_Kw_{j-1}=-\Pi_Kw_{j+1}$, $h_j=h_{j+1}$ and $\Pi_Kw_j=0$ for such $j$.
Therefore equality \eqref{eigprob 2} holds for the remaining indexes $j\in\{kJ_0\}_{k=1}^{K-1}$ as well.
\end{proof}
Note that a similar trick has been recently applied in quite another context in \cite{ZZ17}.

\begin{proposition}
\label{prop2}
Let $\lambda^0=\lambda_R^0+i\lambda_I^0$ with $\lambda_I^0\neq 0$ be an eigenvalue of problem  \eqref{eigprob} for a non-uniform mesh $\overline{\omega}_h=\overline{\omega}_h^{\,0}$.
Then the following condition
\begin{gather}
 \frac{1}{\sqrt{\varkappa(2+\varkappa\tau_0)}}\leq c_1\frac{\tau}{h_\omega}+c_2h_\omega,
\label{nesscond}
\end{gather}
with $c_1>0$ and $c_2>0$ depending solely on $a\lambda^0h_{\omega^0}^2$,
is necessary for validity of the spectral condition \eqref{spectrcond} for the mesh $\overline{\omega}_h=\overline{\omega}_h^{\,0,K}$.
\end{proposition}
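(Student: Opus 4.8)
The plan is to apply Proposition~\ref{prop1} to propagate the single ``bad'' eigenpair $\{\lambda^0,w\}$ of the base mesh $\overline{\omega}_h^{\,0}$ up to the refined family $\overline{\omega}_h^{\,0,K}$, where it becomes the eigenpair $\{\lambda^0K^2,\Pi_Kw\}$, and then to force this scaled eigenvalue through the spectral stability inequality~\eqref{stabcond 1}. Since $\lambda_I^0\neq 0$, exactly one of $\lambda^0$, $(\lambda^0)^*$ has positive imaginary part; because $\{(\lambda^0)^*,w^*\}$ is also an eigenpair, I may assume without loss of generality that $\lambda_I^0>0$, so that the eigenvalue $\lambda:=\lambda^0K^2$ of the $K$-mesh satisfies $\lambda_I>0$ and inequality~\eqref{stabcond 1} is the relevant form of the spectral condition.

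The key computation is then to substitute $\tilde\lambda=\tfrac{a}{2}\lambda^0K^2$ into the right-hand side of~\eqref{stabcond 1}. First I would write $\tilde\lambda_I=\tfrac{a}{2}\lambda_I^0K^2$ and $|\tilde\lambda|^2=\tfrac{a^2}{4}|\lambda^0|^2K^4$, so that
\begin{gather*}
 \frac{|\tilde\lambda|^2}{\tilde\lambda_I}\tau^2+\frac{1}{\tilde\lambda_I}
 =\frac{a|\lambda^0|^2}{2\lambda_I^0}K^2\tau^2+\frac{2}{a\lambda_I^0}\frac{1}{K^2}.
\end{gather*}
The crucial step is to re-express the powers of $K$ through mesh quantities: since $h_\omega=h_{\omega^0}/K$, we have $K=h_{\omega^0}/h_\omega$, hence $K^2\tau^2=(h_{\omega^0}^2/h_\omega^2)\tau^2$ and $K^{-2}=h_\omega^2/h_{\omega^0}^2$. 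Dropping the $+2\tau$ term on the left of~\eqref{stabcond 1} (it only weakens the necessary condition) and noting that $\tfrac{4}{\varkappa(2+\varkappa\tau)}\geq\tfrac{4}{\varkappa(2+\varkappa\tau_0)}$ for $\tau\leq\tau_0$, I obtain from~\eqref{stabcond 1} the bound
\begin{gather*}
 \frac{4}{\varkappa(2+\varkappa\tau_0)}\leq
 \frac{a|\lambda^0|^2h_{\omega^0}^2}{2\lambda_I^0}\Bigl(\frac{\tau}{h_\omega}\Bigr)^2
 +\frac{2}{a\lambda_I^0h_{\omega^0}^2}\,h_\omega^2.
\end{gather*}

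Taking square roots and using $\sqrt{p+q}\leq\sqrt{p}+\sqrt{q}$ for $p,q\geq0$ converts this into the claimed additive form~\eqref{nesscond}, with
\begin{gather*}
 c_1=\Bigl(\frac{a|\lambda^0|^2h_{\omega^0}^2}{2\lambda_I^0}\Bigr)^{1/2},\qquad
 c_2=\Bigl(\frac{2}{a\lambda_I^0h_{\omega^0}^2}\Bigr)^{1/2},
\end{gather*}
both of which depend only on the combination $a\lambda^0h_{\omega^0}^2$ (writing $\lambda_I^0h_{\omega^0}^2$ and $|\lambda^0|^2h_{\omega^0}^4$ in terms of $a\lambda^0h_{\omega^0}^2$ and absorbing fixed factors of $a$), exactly as asserted. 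I do not expect a genuine obstacle here: the content is entirely in Proposition~\ref{prop1}, and the remainder is the elementary rearrangement above. The only points demanding care are the reduction to the case $\lambda_I^0>0$ via the conjugate eigenpair, the monotone replacement of $\tfrac{4}{\varkappa(2+\varkappa\tau)}$ by its value at $\tau_0$, and checking that the stated dependence of $c_1,c_2$ on $a\lambda^0h_{\omega^0}^2$ alone is honestly satisfied after the substitutions.
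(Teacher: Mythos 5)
Your proposal is correct and follows essentially the same route as the paper: the paper's proof likewise assumes $\lambda_I^0>0$, invokes Proposition~\ref{prop1} to rewrite \eqref{stabcond 1} for $\lambda=\lambda^0K^2$ as exactly the intermediate inequality you derive (the paper's \eqref{est1}), and then passes to \eqref{nesscond}. Your write-up merely makes explicit the elementary steps the paper leaves implicit (dropping $2\tau$, monotonicity in $\tau$, subadditivity of the square root, and the check that $c_1,c_2$ are functions of $a\lambda^0h_{\omega^0}^2$ alone), which is fine.
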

\begin{proof}
We can assume that $\lambda_I^0>0$.
Then owing to Proposition \ref{prop1} condition \eqref{stabcond 1} takes the form
\begin{gather} \label{est1}
 \frac{4}{\varkappa(2+\varkappa\tau)}+2\tau
 \leq a\frac{|\lambda^0|^2}{2\lambda_I^0}
 h_{\omega^0}^2\left(\frac{\tau}{h_\omega}\right)^2
 +\frac{2}{a\lambda_I^0h_{\omega^0}^2}h_\omega^2.
\end{gather}
It implies the necessary condition \eqref{nesscond}.
\end{proof}
In particular, for $2c_2\sqrt{\varkappa(2+\varkappa\tau_0)}h_\omega\leq 1$ condition \eqref{nesscond} leads to the ``converse'' condition
\eqref{convcond} with $c_0=2c_1\sqrt{\varkappa(2+\varkappa\tau_0)}$.
Since $\tau \to 0$ and $h_\omega \to 0$, asymptotically we get from \eqref{est1} the more precise constant
$c_0 = \sqrt{a \varkappa}h_{\omega^0}\vert \lambda^0 \vert \,/(2 \sqrt{\lambda_I^0})$.


%
\section{\large Numerical experiments}
\label{sect3}
We consider the Schr{\" o}dinger equation \eqref{eq0} in the simplest case $V(x)=0$, scale time $t$
so that $a=c_\hbar/\hbar=1$ and take the Gaussian wave packet
\begin{equation}
 \psi(x,t):=\sqrt{\frac{1}{1+4it}}\exp\frac{-(x-x_c)^2+ik(x-x_c-kt)}{1+4it}.
\label{eq:ex1}
\end{equation}
Its movement is simulated for $(x,t)\in [0,30]\times [0,1]$, $x_c=5$ and $k=4$ similarly to \cite{radziunas2014B}.
The initial and boundary data correspond to this exact solution; the boundary data are inhomogeneous but very small.

We have analyzed numerically the eigenvalue problem \eqref{eigprob} for small $J$.
For $J_0=14$, we have found the following simply arranged mesh on $[0,30]$ with the following scaled steps $\tilde{h}_j=1.5h_j$:
\[
\tilde{h}_{1} = \tilde{h}_{2} = \tilde{h}_{3} = 1,\ \tilde{h}_{4} =5,\ \tilde{h}_{5} =3,\ \tilde{h}_{6} =1,\ \tilde{h}_{7} =3,\ \tilde{h}_{8} =4,\
\tilde{h}_{j}=5\ (9\leq j\leq 14).
\]
The largest {in modulus} eigenvalue of problem \eqref{eigprob} is real
$\lambda = 9.91401$,
the next two eigenvalues are complex $\lambda = 2.57301 \pm  0.024621 i$ and the remaining eigenvalues are real again.

\par Then the above defined family of meshes $\overline{\omega}_{h}^{\,0,K}$, $K>1$, is applied in numerical experiments.
First we use the meshes with $J=560=40J_0$ and $J=1120=80J_0$.
In Fig.~\ref{figure1} the dynamics of $\|\Psi^m\|_{\omega_h}$ for $0\leq t_m\leq 1$ is shown for different time steps $\tau=1/M$.
One can see that the mass conservation law \eqref{masslaw} is violated dramatically as $M$ increases, i.e. $\tau$ decreases and violates condition \eqref{nesscond}.

\begin{figure}[ht]
\begin{center}
\begin{tabular}{cc}
\includegraphics[scale=0.3]{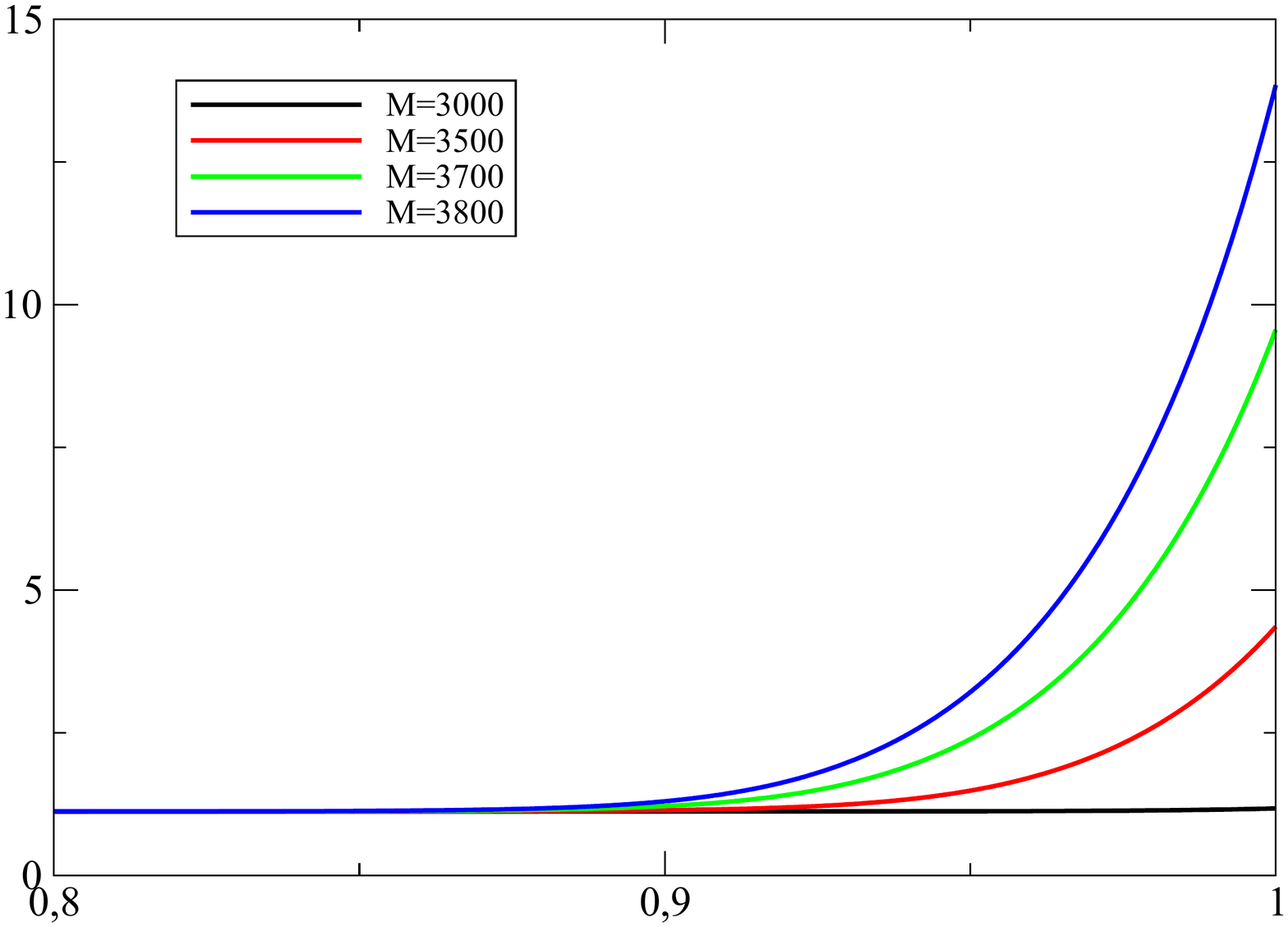} &
\includegraphics[scale=0.3]{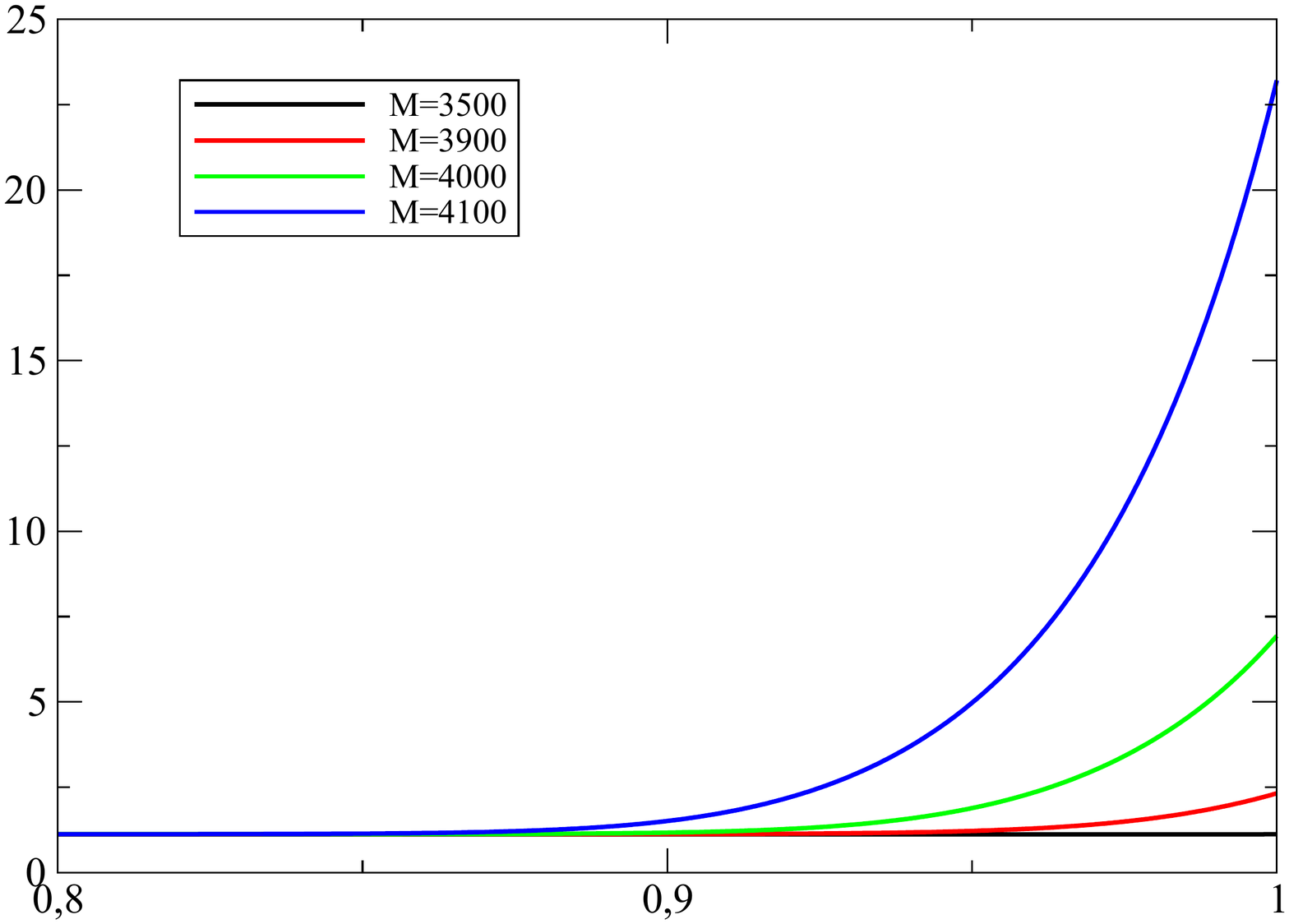} \\
a) $J=560$  & b) $J=1120$
\end{tabular}
\caption{The dynamics of $\|\Psi^m\|_{\omega_h}$ for
different numbers of time steps on $[0,1]$
}
\label{figure1}
\end{center}
\end{figure}

In the next numerical experiments we investigate the accuracy and the practical order that can be achieved for the Numerov-Crank-Nicolson scheme keeping in mind condition \eqref{nesscond}.
Let us define errors of the discrete solution in the maximum norm
and the convergence rates
\[
 err(J,M):=\max_{0\leq m\leq M}\max_{0\leq j\leq J}|\psi(x_j,t_m)-\Psi_j^m|,\ \
 p_k:=\log\frac{err(J_k,M_k)}{err(J_{k+1},M_{k+1})}\Big/\log\frac{J_{k+1}}{J_k},\ \ J_{k+1}>J_k.
\]
We choose the number of time steps $M_*=M(J)$ such that the error $err(J,M_*)$
is minimal up to the local search with a step equal to 100.
Results are presented in Table~\ref{table1}.
\begin{table}[ht]
\begin{center}
\caption{The errors $err(J,M_*)$ and the corresponding convergence rates $p$}
\label{table1}
\vspace{2mm}
\begin{tabular}{llccc}
\hline\noalign{\smallskip}
$k$ & $J$ & $M_*$ & $err(J,M_*)$ & $ p$ \\
\noalign{\smallskip}
\hline
\noalign{\smallskip}
0& 560  & 2000  & 1.116e-3 & ---     \\
1& 840  & 2300  & 3.40e-4  & 3.027    \\
2& 1120 & 2900  & 1.45e-4  & 2.962    \\
3& 1680 & 4000  & 5.57e-5  & 2.360    \\
\hline
\end{tabular}
\end{center}
\end{table}
\par We can observe that the convergence rate is close to the 3rd order for smaller
values of $J$ but it reduces down as $J$ increases, and thus the scheme asymptotically tends to lose its main higher
order property.
\newpage
\small{

}
\end{document}